\newtheorem{theorem}{Theorem}
\newtheorem{definition}{Definition}
\numberwithin{equation}{section}
\numberwithin{lemma}{section}
\numberwithin{theorem}{section}
\numberwithin{corollary}{section}
\begin{document}
\setcounter{page}{1}
\title{Some extended hypergeometric matrix  functions and their fractional calculus}

\author{Ashish Verma \footnote{Department of Mathematics, Prof. Rajendra Singh (Rajju Bhaiya), Institute of Physical Sciences for Study and Research   V.B.S. Purvanchal University, Jaunpur  (U.P.)- 222003, India. \newline Email: vashish.lu@gmail.com (Corresponding author)}, \, Ravi Dwivedi \footnote{Department of Basic Science and Humanities, PSIT College of Engineering, Kanpur 209305, India.\newline Email: dwivedir999@gmail.com} \, and Vivek Sahai \footnote{Department of Mathematics and Astronomy, Lucknow University, Lucknow 226007, India.\newline Email: sahai\_vivek@hotmail.com}}

\maketitle
\begin{abstract}
In this paper, we study some extended hypergeometric functions from matrix point of view. We have given the integral representations of these matrix functions. Finally, we obtain some generating function relations using fractional derivative operators.

\medskip
\noindent\textbf{Keywords}: Matrix functional calculus, Appell functions, Lauricella functions, Fractional derivative operators

\medskip
\noindent\textbf{AMS Subject Classification}: 15A15; 33C65. 
\end{abstract}

\section{Introduction}
The Gauss hypergeometric function and Kummer hypergeometric function are amongst the most appeared special functions in mathematics as well as in physics. Due to a wide range of applications, these hypergeometric functions have been studied from different aspects \emph{viz.} from  matrix point of view \cite {jjc98a, AM, ajt}; extended Gauss and Kummer hypergeometric functions \cite{cqs} and finite field analogue \cite{jg} to name a few. Recently, the matrix version of extended beta function,  extended Gauss hypergeometric function and Kummer hypergeometric function have been studied by Abdalla and Bakhet in \cite{ab1, ab2}. The regions of convergence, integral representations and differential formulas satisfied by these matrix functions are determined.   Verma   discussed recursion formula, infinite summation formula for the Srivastava's triple hypergeometric matrix functions $H_{\mathcal{A}}$, $H_{\mathcal{B}}$ and $H_{\mathcal{C}}$ \cite{A4}. In \cite{A1, A2, A3,  AP}, authors  introduced the  incomplete first, second and fourth  Appell hypergeometric matrix functions and  incomplete Srivastava's  triple  hypergeometric matrix  functions  and studied some basic properties;  matrix differential equation, integral formula, recursion formula, recurrence relation and differentiation formula of these functions. In this paper, we study the matrix analogues of some extended hypergeometric functions of two and three variables. We obtain the  integral representations for these matrix functions. Finally, we use the extended Riemann-Liouville derivative operator and obtain certain generating function relations in terms of these matrix functions. The section-wise treatment is as follows:    

In Section~2, we give the basic definitions related to special matrix functions that are needed in the sequel.  In Section~3, we define the extended Appell matrix functions and extended Lauricella matrix function.  We also discuss the regions of convergence as well as the integral representations satisfied by these matrix functions. In Section~4, we define the matrix analogue of extended Riemann-Liouville derivative operators and transform several matrix functions under these derivative operators. Finally, in Section~5, we obtain certain generating function relations which turn out in terms of extended Appell matrix functions and extended Lauricella matrix function.

\section{Preliminaries}
Throughout the paper, $\mathbb{C}^{r\times r}$ is the vector space of $r$-square matrices with complex entries. For a matrix   $A\in \mathbb{C}^{r\times r}$ the spectrum, denoted by $\sigma(A)$, is the set of eigenvalues of $A$. 
If $\Re(z)$ denotes the real part of a complex number $z$, then a matrix $A$ in $\mathbb{C}^{r\times r}$  is said to be positive stable if $\Re(\lambda)>0$ for all $\lambda\in\sigma(A)$. 
 
 If $A$ is a positive stable matrix in $\mathbb{C}^{r \times r}$, then $\Gamma(A)$ can be expressed as \cite{jjc98a}
 \begin{equation}
 \Gamma(A) = \int_{0}^{\infty} e^{-t} \, t^{A-I}\, dt.\label{1a1.4}
 \end{equation} 
Furthermore, if $A+kI$ is invertible for all integers $k\geq 0$, then the reciprocal gamma matrix function is defined as \cite{jjc98a}
\begin{equation}
\Gamma^{-1}(A)= A(A+I)\dots (A+(n-1)I)\Gamma^{-1}(A+nI) , \  n\geq 1.\label{eq.07}
\end{equation}
If $M \in \mathbb{C}^{r\times r}$ is a positive stable matrix  and $n\geq 1$ is an integer, then the  gamma matrix function can also be defined in the form of a limit as \cite{jjc98a}
\begin{equation} 
\Gamma (M) = \lim_{n \to \infty} (n-1)! \, (M)_n^{-1} \, n^M. \label{c1eq10} 
\end{equation}
By  application of the matrix functional calculus, the Pochhammer symbol  for  $A\in \mathbb{C}^{r\times r}$ is given by \cite{jjc98b}
\begin{equation}
(A)_n = \begin{cases}
I, & \text{if $n = 0$},\\
A(A+I) \dots (A+(n-1)I), & \text{if $n\geq 1$}.
\end{cases}\label{c1eq.09}
\end{equation}
This gives
\begin{equation}
(A)_n = \Gamma^{-1}(A) \ \Gamma (A+nI), \qquad n\geq 1.\label{c1eq.010}
\end{equation} 
 If $A$ and $B$ are positive stable matrices in $\mathbb{C}^{r \times r}$, then, for $AB = BA$, the beta matrix function is defined as \cite{jjc98a}
\begin{align}
\mathfrak{B}(A,B) =\Gamma(A)\Gamma(B)\Gamma^{-1}(A+B) &= \int_{0}^{1}t^{A-I}(1-t)^{B-I}dt \label{1ca1.4}\\
&=\int_{0}^{\infty}u^{A-I}(1+u)^{-(A+B)}du.\label{1ca1.5}
\end{align}
Let $A$, $B$ and $\mathbb{X}$ be positive stable  and commuting matrices in $\mathbb{C}^{r\times r}$ such that $A+kI$,  $B+kI$ and $\mathbb{X}+kI$ are invertible for all integer $k\geq 0$. Then the extended beta matrix function $\mathfrak{B}(A, B; \mathbb{X})$ is defined by \cite{ab1}
\begin{align}
\mathfrak{B}(A, B; \mathbb{X})=\int_{0}^{1} t^{A-I} (1-t)^{B-I} \exp\Big(\frac{-\mathbb{X}}{t(1-t)}\Big)dt.\label{xb1}
\end{align}
Hence,
\begin{align}
\mathfrak{B}(A, B; \mathbb{X})= \Gamma({A}, \mathbb{X}) \,\Gamma({B}, \mathbb{X})\, \Gamma^{-1}({A+B}, \mathbb{X}).
\end{align}
It is obvious that $\mathbb{X}=O$ gives the original beta matrix  function \cite{jjc98a}.
Let $A$, $B$, $C$ be positive stable matrices in $\mathbb{C}^{r\times r}$ such that $C+kI$ is invertible for all integers $k\ge 0$. Then the Gauss hypergeometric matrix function is defined by \cite{jjc98b}
\begin{align}
{}_2F_1 (A, B; C; z) = \sum_{n=0}^{\infty} (A)_n (B)_n (C)_n^{-1} \frac{z^n}{n!}.\label{52.9}
\end{align}
The series \eqref{52.9} converges absolutely for $\vert z\vert < 1$ and for $z = 1$, if $\alpha(A) + \alpha(B) < \beta(C)$, where $\alpha(A) = \max\{\, \Re(z) \mid z\in \sigma(A)\, \}$, $\beta(A) = \min\{\, \Re(z) \mid z \in \sigma(A)\, \}$ and $\beta(A) = -\alpha(-A)$.

Furthermore, if $CB=BC$ and $C$, $B$ and $C-B$ are positive stable, then for $|z|<1$ an integral representation of \eqref{52.9} is given by  \cite{jjc98b}
\begin{align}
{_2F_{1}}(A, B; C; z) = &\left(\int_{0}^{1} (1-z t)^{-A}\,t^{B-I} (1-t)^{C-B-I}dt\right)\nonumber\\
&\times\Gamma^{-1}(B)\Gamma^{-1}(C-B) \Gamma(C).
\end{align} 
Recently,  Abdalla et{.al} generalized the Gauss and  Kummer hypergeometric matrix function. Let $A$, $B$, $C$, $C-B$ and $\mathbb{X}$ be positive stable matrices in $\mathbb{C}^{r \times r}$ such that $CB = BC$, $C\mathbb{X} = \mathbb{X}C$ and $B \mathbb{X} = \mathbb{X} B$. Then the extended Gauss hypergeometric matrix function (EGHMF) and  extended Kummer hypergeometric matrix function (EKHMF) are defined by \cite{ab2}
\begin{align}
&F^{(\mathbb{X})}(A, B; C; z)\notag\\
&=  \left(\sum_{m\geq 0} (A)_{m} \, \mathfrak{B}(B+mI, C-B; \mathbb{X})\frac{z^{m}}{m!}\right) \times \Gamma(C)\Gamma^{-1}(B)\Gamma^{-1}(C-B);\label{eg1}
\end{align}
and
\begin{align}
&\phi^{(\mathbb{X})}(B; C; z)\notag\\
&=  \left(\sum_{m\geq 0} \, \mathfrak{B}(B+mI, C-B; \mathbb{X})\frac{z^{m}}{m!}\right) \times \Gamma(C)\Gamma^{-1}(B)\Gamma^{-1}(C-B)\label{kh1}
\end{align}
respectively.

Integral representation of extended matrix functions given in Equations \eqref{eg1} and \eqref{kh1} are as follows
\begin{align}
F^{(\mathbb{X})}(A, B; C; z) & = \left(\int_{0}^{1}  (1-zt)^{-A} t^{B-I} (1-t)^{C-B-I} \exp\Big(\frac{-\mathbb{X}}{t(1-t)}\Big)dt\right)\notag\\
& \quad \times \Gamma(C)\Gamma^{-1}(B)\Gamma^{-1}(C-B);\label{ieg1}
\end{align}
\begin{align}
&\phi^{(\mathbb{X})}(B; C; z) \notag\\
& = \left(\int_{0}^{1} t^{B-I} (1-t)^{C-B-I} \exp\Big(\frac{-\mathbb{X}}{t(1-t)}\Big)dt\right) \times \Gamma(C)\Gamma^{-1}(B)\Gamma^{-1}(C-B).\label{ikh1}
\end{align}
\section{The extended  hypergeometric matrix  functions of two and three variables}
In this section, we introduce extended Appell matrix  functions (EAMF's) and extended Lauricella's matrix  function (ELMF) of three variables. More explicitly, we give the extended form of  Appell matrix  functions  $F_{1}(A, B, B'; C; z, w)$, $F_{2}(A, B, B'; C, C'; z, w)$ and  Lauricella matrix  function of three variables $F^{(3)}_{D}(A, B, B', B''; C; z, w, v)$, \cite {ds1,ds4, ds5}, in terms of the extended beta matrix function. We also give here the integral representations for these extended hypergeometric matrix functions. 

Let $A$, $B$, $B'$, $C$, $C-A$ and $\mathbb{X}$ be positive stable matrices in $\mathbb{C}^{r \times r}$ such that $A$, $C$, $\mathbb{X}$ commutes with each other and $CB = BC$, $CB' = B'C$. Then, we define extended Appell  hypergeometric matrix  function $F_{1}(A, B, B'; C;z, w; \mathbb{X})$ as
\begin{align}
&F_{1}(A, B, B'; C;z, w; \mathbb{X})\notag\\
&= \Gamma\left(\begin{array}{c}C\\ A, C-A\end{array}\right) \sum_{m, n\geq 0}\mathfrak{B}(A+(m+n)I, C-A; \mathbb{X}) \, (B)_{m} (B')_{n}\frac{z^{m} w^{n}}{m! n!}, \label{2eq1}
\end{align}
where $\Gamma\left(\begin{array}{c}C\\ A, C-A\end{array}\right) = \Gamma(C) \Gamma^{-1} (A) \Gamma^{-1} (C-A)$. 

For positive stable matrices $A$, $B$, $B'$, $C$, $C'$, $C-B$, $C'-B'$ and $\mathbb{X}$ in $\mathbb{C}^{r \times r}$ such that $B$, $B'$, $C$, $C'$  and $\mathbb{X}$ commutes with each other, we define the extended Appell  hypergeometric matrix  function $F_{2}(A, B, B'; C, C'; z, w; \mathbb{X})$ as
\begin{align}
&F_{2}(A, B, B'; C, C'; z, w; \mathbb{X})\notag\\&= \sum_{m, n\geq 0}(A)_{m+n} \mathfrak{B}(B+mI, C-B; \mathbb{X}) \mathfrak{B}(B'+nI, C'-B'; \mathbb{X})\frac{z^{m}w^{n}}{m! n!} \notag\\
& \quad \times \Gamma\left(\begin{array}{c}C, C'\\ B, B', C-B, C'-B'\end{array}\right).\label{2eq2}
\end{align}
Suppose $A$, $B$, $B'$, $B''$, $C$, $C-A$ and $\mathbb{X}$ be positive stable matrices in $\mathbb{C}^{r \times r}$ such that $A$, $C$, $\mathbb{X}$ commutes with each other and $CB = BC$, $CB' = B'C$, $CB'' = B'' C$. Then, we define the extended Lauricella  hypergeometric matrix  function $F^{(3)}_{D, \mathbb{X}}(A, B, B', B''; C ; z, w, v)$ as
\begin{align}
&F^{(3)}_{D, \mathbb{X}}(A, B, B', B''; C ; z, w, v)\notag\\
& = \Gamma\left(\begin{array}{c}C\\ A, C-A\end{array}\right) \sum_{m, n, p\geq 0}\mathfrak{B}(A+(m+n+p)I, C-A; \mathbb{X}) (B)_{m} (B')_{n} (B'')_{p} \frac{z^{m} w^{n}v^{p}}{m! n! p!}.\label{2eq3}
\end{align}
We now turn our attention in finding the integral representations of extended Appell matrix  functions (EAMF's) and extended Lauricella  matrix  function (ELMF) of three variables. We start with the integral representation of  $F_{1}(A, B, B'; C; z, w; \mathbb{X})$ determined in the next theorem.
\begin{theorem}\label{t1} Let $A$, $B$, $B'$, $C$, $C-A$ and $\mathbb{X}$ be positive stable matrices in $\mathbb{C}^{r \times r}$ such that $A$, $C$, $\mathbb{X}$ commutes with each other and $CB = BC$, $CB' = B'C$. Then  the  EAMF $F_{1}(A, B, B'; C; z, w; \mathbb{X})$ can be presented in the integral form as 
\begin{align}
F_{1}(A, B, B'; C; z, w; \mathbb{X}) &
=   \Gamma\left(\begin{array}{c}C\\ A, C-A\end{array}\right) \int_{0}^{1} u^{A-I} (1-u)^{C-A-I} (1-zu)^{-B} \nonumber\\
 &\quad \times(1-wu)^{-B'}\exp\Big(\frac{-\mathbb{X}}{u(1-u)}\Big)du.
\end{align}
\end{theorem}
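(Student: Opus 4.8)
The plan is to begin from the series definition \eqref{2eq1} and unfold the extended beta matrix function appearing in each term. Since $A$, $C$ and $\mathbb{X}$ mutually commute, the integrand defining $\mathfrak{B}(A+(m+n)I,C-A;\mathbb{X})$ in \eqref{xb1} consists of commuting factors, and the scalar power $u^{m+n}$ splits off from $u^{A+(m+n)I-I}$; thus
\begin{align}
\mathfrak{B}(A+(m+n)I, C-A; \mathbb{X}) = \int_{0}^{1} u^{m+n}\, u^{A-I} (1-u)^{C-A-I} \exp\Big(\frac{-\mathbb{X}}{u(1-u)}\Big)\, du.
\end{align}
Substituting this into \eqref{2eq1} and anticipating the interchange of the double summation with the integral, I would write $u^{m+n} z^{m} w^{n} = (zu)^{m} (wu)^{n}$ and collect the $m$- and $n$-dependent matrix coefficients $(B)_{m}$ and $(B')_{n}$ into two separate series, keeping the factor $(B)_{m}$ to the left of $(B')_{n}$ so that no commutativity between $B$ and $B'$ is needed.

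Next I would invoke the matrix binomial expansions
\begin{align}
(1-zu)^{-B} = \sum_{m\ge 0} (B)_{m}\, \frac{(zu)^{m}}{m!}, \qquad (1-wu)^{-B'} = \sum_{n\ge 0} (B')_{n}\, \frac{(wu)^{n}}{n!},
\end{align}
which are valid for $|zu|<1$ and $|wu|<1$, hence for all $u\in[0,1]$ whenever $|z|<1$ and $|w|<1$. Reading the computation of the previous paragraph backwards, the integrated double series then collapses to
\begin{align}
F_{1}(A, B, B'; C; z, w; \mathbb{X}) &= \Gamma\left(\begin{array}{c}C\\ A, C-A\end{array}\right) \int_{0}^{1} u^{A-I} (1-u)^{C-A-I} (1-zu)^{-B}\notag\\
&\quad \times (1-wu)^{-B'} \exp\Big(\frac{-\mathbb{X}}{u(1-u)}\Big)\, du,
\end{align}
which is the asserted representation.

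The step that requires the most care is the termwise interchange of the double sum and the integral, together with the placement of the matrix factors. For the interchange, the exponential factor $\exp(-\mathbb{X}/(u(1-u)))$ decays to the zero matrix at both endpoints $u=0$ and $u=1$ and so suppresses the singularities of $u^{A-I}$ and $(1-u)^{C-A-I}$ there; combined with $|z|,|w|<1$ this produces a convergent majorant (in any submultiplicative matrix norm) for the double series uniformly in $u$, legitimizing the interchange by dominated convergence. The more delicate point is the ordering: in \eqref{2eq1} the extended beta function stands to the \emph{left} of $(B)_{m}(B')_{n}$, whereas in the integrand above the Pochhammer factors sit \emph{between} the $A$-dependent powers and the exponential. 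Reconciling the two uses the mutual commutativity of $A$, $C$, $\mathbb{X}$ (so that the $A$- and $\mathbb{X}$-dependent factors commute with one another) and the transport of $\exp(-\mathbb{X}/(u(1-u)))$ across $(B)_{m}(B')_{n}$; I would state this last compatibility explicitly, since it is exactly what allows the constant matrix $(B)_{m}(B')_{n}$ to be extracted to the right of the $u$-integral and matched with the series.
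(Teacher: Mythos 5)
Your proposal is correct and follows essentially the same route as the paper: substitute the integral representation \eqref{xb1} of the extended beta matrix function into the series definition \eqref{2eq1}, interchange the double sum with the integral by dominated convergence, and resum via the matrix binomial identity \eqref{s11} to obtain $(1-zu)^{-B}(1-wu)^{-B'}$. Your additional remarks on the majorant for the interchange and on the placement of the matrix factors only make explicit what the paper leaves implicit.
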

\begin{proof}  Using  the  integral representation of extended beta matrix function from \eqref{xb1}  in the definition of  the EAMF $F_{1}(A, B, B'; C; z, w; \mathbb{X})$, we get
\begin{align}
F_{1}(A, B, B'; C; z, w; \mathbb{X}) &= \Gamma\left(\begin{array}{c}C\\ A, C-A\end{array}\right) \sum_{m, n\geq 0}^{} \int_{0}^{1} u^{A-I} (1-u)^{C-A-I} \exp\left(\frac{-\mathbb{X}}{u(1-u)}\right) \notag\\ 
&\quad \times (B)_{m} (B')_{n}\frac{{(zu)}^{m}{(wu)}^{n}}{m! n!}du.\label{3.5} 
\end{align}
Using the process discussed in \cite{ds1} we can show that the sequence of matrix functions in \eqref{3.5} is integrable and by dominated convergence theorem \cite{gf}, the summation and the integral can be interchanged in \eqref{3.5}. Now applying the matrix identity,
 \begin{align}
(1-z)^{-A}=\sum_{n=0}^{\infty}(A)_{n}\frac{z^{n}}{n!},\label{s11}
\end{align}
we can rewrite the Equation \eqref{3.5} as follows
\begin{align}
F_{1}(A, B, B'; C; z, w; \mathbb{X}) & = \Gamma\left(\begin{array}{c}C\\ A, C-A\end{array}\right) \int_{0}^{1} u^{A-I} (1-u)^{C-A-I} \exp\Big(\frac{-\mathbb{X}}{u(1-u)}\Big) \notag\\
 &\quad \times (1-zu)^{-B} (1-wu)^{-B'} du.
\end{align}
This completes the proof of Theorem~\ref{t1}.
\end{proof}
\begin{theorem}\label{ps1}
Let $A$, $B$, $B'$, $C$, $C'$, $C-B$, $C'-B'$ and $\mathbb{X}$ be  positive stable matrices in $\mathbb{C}^{r \times r}$ such that $B$, $B'$, $C$, $C'$  and $\mathbb{X}$ commutes with each other. Then the EAMF  $F_{2}(A, B, B'; C, C'; z, w; \mathbb{X})$  defined in \eqref{2eq2} has following integral representation: 
\begin{align}
&F_{2}(A, B, B'; C, C'; z, w; \mathbb{X})\notag\\&
=  \int_{0}^{1}\int_{0}^{1} (1-zu-wv)^{-A} u^{B-I} (1-u)^{C-B-I} v^{B'-I} (1-v)^{C'-B'-I} \notag\\
& \quad \times \exp \left( \frac{-\mathbb{X}}{u(1-u)}-\frac{\mathbb{X}}{v(1-v)}\right) du \, dv \,  \Gamma\left(\begin{array}{c}C, C'\\ B, B',  C-B, C'-B'\end{array}\right).\label{s33}
\end{align}
\end{theorem}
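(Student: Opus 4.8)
The plan is to mirror the argument that established Theorem~\ref{t1}, now adapted to the two-variable setting in which two extended beta matrix functions appear and therefore yield a double integral. First I would insert the integral representation \eqref{xb1} of the extended beta matrix function for each of the two factors $\mathfrak{B}(B+mI, C-B; \mathbb{X})$ and $\mathfrak{B}(B'+nI, C'-B'; \mathbb{X})$ occurring in the defining series \eqref{2eq2}. Splitting the matrix powers as $u^{B+mI-I}=u^{m}\,u^{B-I}$ and $v^{B'+nI-I}=v^{n}\,v^{B'-I}$, the scalar factors $u^{m}$ and $v^{n}$ merge with $z^{m}$ and $w^{n}$ into $(zu)^{m}$ and $(wv)^{n}$. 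This recasts $F_{2}$ as a double series of double integrals over the unit square $[0,1]\times[0,1]$.

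The next step is to interchange the double summation with the double integration. Exactly as in the proof of Theorem~\ref{t1}, I would check that the sequence of matrix-valued integrands is integrable and invoke the dominated convergence theorem \cite{gf}; the two exponential factors $\exp(-\mathbb{X}/(u(1-u)))$ and $\exp(-\mathbb{X}/(v(1-v)))$ supply the decay near the endpoints that makes this legitimate. After the interchange the integrand contains the inner double series $\sum_{m,n\ge 0}(A)_{m+n}\,(zu)^{m}(wv)^{n}/(m!\,n!)$, and throughout I would keep the matrix $(A)_{m+n}$ to the left of the remaining factors $u^{B-I}$, $(1-u)^{C-B-I}$, $v^{B'-I}$, $(1-v)^{C'-B'-I}$ and the two exponentials. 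Preserving this ordering is essential, since $A$ is not assumed to commute with $B$, $B'$, $C$, $C'$ or $\mathbb{X}$, whereas those latter matrices do commute with one another by hypothesis and may be freely rearranged.

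The decisive step is to sum the inner double series in closed form. For this I would use the two-variable analogue of the matrix binomial identity \eqref{s11}, namely
\begin{align}
(1-x-y)^{-A}=\sum_{m, n\geq 0}(A)_{m+n}\,\frac{x^{m}\,y^{n}}{m!\,n!},\qquad |x|+|y|<1,\label{s11two}
\end{align}
which follows from \eqref{s11} applied with $z=x+y$, the scalar binomial expansion of $(x+y)^{k}$, and a regrouping of the terms according to $m+n=k$. Taking $x=zu$ and $y=wv$ collapses the inner series to $(1-zu-wv)^{-A}$, and restoring the accompanying gamma-matrix factor from \eqref{2eq2} on the right leaves precisely the integrand claimed in \eqref{s33}.

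I expect the main obstacle to be the rigorous justification of the term-by-term integration: one must exhibit an integrable dominating function for the matrix-valued partial sums that is uniform in the pair $(m,n)$, so that dominated convergence may be applied to the double series and double integral at once. Once this is secured, identity \eqref{s11two} together with the careful bookkeeping of the non-commuting factor $A$ completes the proof of Theorem~\ref{ps1}.
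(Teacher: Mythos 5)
Your proposal is correct and follows essentially the same route as the paper: substitute the integral representation \eqref{xb1} for both extended beta matrix factors, interchange summation and integration by dominated convergence, and collapse the inner double series to $(1-zu-wv)^{-A}$. The only cosmetic difference is that the paper performs this last step in two stages, first regrouping via the summation formula $\sum_{m,n\ge 0}f(m+n)\frac{z^m}{m!}\frac{w^n}{n!}=\sum_{N\ge 0}f(N)\frac{(z+w)^N}{N!}$ from \cite{SM} and then applying \eqref{s11}, whereas you package both stages into the single two-variable binomial identity, which amounts to the same computation.
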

\begin{proof}
Using extended beta matrix function  and the EAMF $F_{2}(A, B, B'; C, C'; z, w; \mathbb{X})$  defined in \eqref{xb1} and \eqref{2eq2} respectively, we have
\begin{align}
&F_{2}(A, B, B'; C, C'; z, w; \mathbb{X})\notag\\
& = \sum_{m, n\geq 0}^{} \int_{0}^{1}\int_{0}^{1} (A)_{m+n} \frac{(zu)^{m} (wv)^{n}}{m! n!} u^{B-I} (1-u)^{C-B-I} v^{B'-I} (1-v)^{C'-B'-I}\notag\\
&\quad \times \exp \left(\frac{-\mathbb{X}}{u(1-u)}-\frac{\mathbb{X}}{v(1-v)}\right) \, du \, dv \, \Gamma\left(\begin{array}{c}C, C'\\ B, B',  C-B, C'-B'\end{array}\right).\label{3.9}
\end{align}
Summation and integral in \eqref{3.9} can be interchanged by using the dominated convergence theorem. Taking into account the summation formula \cite{SM}
\begin{align}
\sum_{N\geq 0}^{}f(N) \frac{(z+w)^{N}}{N!}=\sum_{m, n\geq 0}^{}f(m+n)\frac{z^{m}}{m!}\frac{w^{n}}{n!},
\end{align}
we get
\begin{align}
&F_{2}(A, B, B'; C, C'; z, w; \mathbb{X})\notag\\&
= \int_{0}^{1}\int_{0}^{1} \sum_{N\geq 0}^{}(A)_{N} \frac{(zu+wv)^{N}}{N!}  u^{B-I} (1-u)^{C-B-I} v^{B'-I} (1-v)^{C'-B'-I}\notag\\
& \quad \times \exp \Big(\frac{-\mathbb{X}}{u(1-u)}-\frac{\mathbb{X}}{v(1-v)}\Big)dudv \ \Gamma\left(\begin{array}{c}C, C'\\ B, B',  C-B, C'-B'\end{array}\right).\label{s22}
\end{align}
From \eqref{s11} and \eqref{s22}, we get \eqref{s33}.
\end{proof}
\begin{theorem}
Suppose $A$, $B$, $B'$, $B''$, $C$, $C-A$ and $\mathbb{X}$ be positive stable matrices in $\mathbb{C}^{r \times r}$ such that $A$, $C$, $\mathbb{X}$ commutes with each other and $CB = BC$, $CB' = B'C$, $CB'' = B'' C$. Then the ELMF $F^{(3)}_{D, \mathbb{X}}(A, B, B', B''; C; z, w, v)$  defined in \eqref{2eq3}  have the following integral representation: 
\begin{align}
F^{(3)}_{D, \mathbb{X}}(A, B, B', B''; C; z, w, v)& 
=  \Gamma\left(\begin{array}{c}C\\ A, C-A\end{array}\right) \int_{0}^{1} u^{A-I} (1-u)^{C-A-I} \exp\Big(\frac{-\mathbb{X}}{u(1-u)}\Big) \notag\\
& \quad \times (1-zu)^{-B} (1-wu)^{-B'} (1-vu)^{-B''} du.\label{3.12}
\end{align}
\end{theorem}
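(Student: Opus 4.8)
The plan is to mirror the proof of Theorem~\ref{t1}, since the extended Lauricella matrix function is the natural three-variable analogue of the extended Appell function $F_{1}$. First I would start from the series definition \eqref{2eq3} and replace the extended beta matrix function $\mathfrak{B}(A+(m+n+p)I, C-A; \mathbb{X})$ by its integral representation \eqref{xb1}, namely
\begin{align}
\mathfrak{B}(A+(m+n+p)I, C-A; \mathbb{X}) = \int_{0}^{1} u^{A+(m+n+p)I-I} (1-u)^{C-A-I} \exp\Big(\frac{-\mathbb{X}}{u(1-u)}\Big)\,du.\notag
\end{align}
Because $u$ is a scalar, $u^{(m+n+p)I} = u^{m+n+p} I$, so the factor $u^{m+n+p}$ can be absorbed into the powers of the variables, turning $z^{m} w^{n} v^{p}$ into $(zu)^{m}(wu)^{n}(vu)^{p}$. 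The commutativity of $A$, $C$ and $\mathbb{X}$ is precisely what keeps the integrand $u^{A-I}(1-u)^{C-A-I}\exp(-\mathbb{X}/(u(1-u)))$ well defined at this stage.

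Next I would justify interchanging the triple summation with the integral. Exactly as in Theorems~\ref{t1} and~\ref{ps1}, the sequence of matrix functions appearing under the integral sign is integrable by the argument of \cite{ds1}, and the dominated convergence theorem \cite{gf} permits the exchange. This yields
\begin{align}
&F^{(3)}_{D, \mathbb{X}}(A, B, B', B''; C; z, w, v)\notag\\
&= \Gamma\left(\begin{array}{c}C\\ A, C-A\end{array}\right)\int_{0}^{1} u^{A-I}(1-u)^{C-A-I}\exp\Big(\frac{-\mathbb{X}}{u(1-u)}\Big)\notag\\
&\quad\times\Bigg(\sum_{m\geq 0}(B)_{m}\frac{(zu)^{m}}{m!}\Bigg)\Bigg(\sum_{n\geq 0}(B')_{n}\frac{(wu)^{n}}{n!}\Bigg)\Bigg(\sum_{p\geq 0}(B'')_{p}\frac{(vu)^{p}}{p!}\Bigg)\,du,\notag
\end{align}
where the three sums factor because the Pochhammer symbols $(B)_{m}$, $(B')_{n}$, $(B'')_{p}$ carry disjoint summation indices and retain their prescribed left-to-right order.

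Finally I would apply the matrix binomial expansion \eqref{s11} to each of the three inner series separately, obtaining $(1-zu)^{-B}$, $(1-wu)^{-B'}$ and $(1-vu)^{-B''}$ respectively, and collecting these factors produces the claimed integral \eqref{3.12}. I expect the only delicate point to be the convergence and integrability estimate needed for the dominated convergence theorem; however, this is handled verbatim by the bounds of \cite{ds1} already invoked in the preceding theorems, so no genuinely new obstacle arises. The potential noncommutativity of $B$, $B'$ and $B''$ among themselves causes no difficulty, since the factorization preserves their order throughout the computation.
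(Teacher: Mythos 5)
Your proposal is correct and follows essentially the same route as the paper's own proof: substitute the integral form \eqref{xb1} of the extended beta matrix function into the series \eqref{2eq3}, interchange summation and integration by dominated convergence as in Theorem~\ref{t1}, and apply the expansion \eqref{s11} to each of the three factored sums. The extra detail you supply on absorbing $u^{m+n+p}$ and on preserving the order of the noncommuting factors is consistent with what the paper leaves implicit.
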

\begin{proof}
Equations \eqref{xb1} and \eqref{2eq3} together yield
\begin{align}
& F^{(3)}_{D, \mathbb{X}}(A, B, B', B''; C; z, w,v)\nonumber\\
& = \Gamma\left(\begin{array}{c} C \\ A, C-A\end{array}\right) \sum_{m, n, p\geq 0} \int_{0}^{1} u^{A-I} (1-u)^{C-A-I} \exp\Big(\frac{-\mathbb{X}}{u(1-u)}\Big) (B)_{m} (B')_{n} \nonumber\\
& \quad \times (B'')_{p} \frac{(uz)^{m} (uw)^{n} (uv)^{p}}{m! n! p!}.
\end{align}
Now, using the matrix relation \eqref{s11} and proceeding in the similar as in Theorem~\ref{t1}, we get the required result \eqref{3.12}.
\end{proof}
\section{Fractional calculus of extended hypergeometric matrix function}
The extended Riemann-Liouville fractional derivative of order $\mu$ is given by \cite{SM}
\begin{align}
&D_{z}^{\mu, p}\{f(z)\}\nonumber\\
 & =\frac{1}{\Gamma{(-\mu)}}\int_{0}^{z}f(t) (z-t)^{-\mu-1}\exp\left(\frac{-pz^{2}}{t(z-t)}\right)dt,\quad \Re(\mu)<0, \Re(p)>0 \label{r1}
\end{align}
and for $m-1<\Re(\mu)<m$ (m= 1, 2,\dots)
\begin{align}
D_{z}^{\mu, p}\{f(z)\}=&\frac{d^{m}}{dz^{m}}D_{z}^{\mu-m, p}\{f(z)\}\notag\\
=&\frac{d^{m}}{dz^{m}}\left(\frac{1}{\Gamma{(-\mu+m)}}\int_{0}^{z}f(t)(z-t)^{-\mu+m-1}\exp\left(\frac{-pz^{2}}{t(z-t)}\right)dt\right)\label{r2}
\end{align}
where the path of integration is a line from $0$ to $z$ in the complex t-plane. For the case $p=0$, we obtain the classical
Riemann-Liouville fractional derivative operator.

\begin{definition} 
Let  $\mathbb{X}$ be a positive stable matrix in $\mathbb{C}^{r\times r}$  and $\mu\in\mathbb{C}$ such that $\Re(\mu)<0$.  Then, the extended Riemann-Liouville fractional derivative of order $\mu$ is defined as follows
\begin{align}
D_{z}^{\mu, \mathbb{X}}\{f(z)\}=\frac{1}{\Gamma{(-\mu)}}\int_{0}^{z}f(t) (z-t)^{-\mu-1}\exp\left(\frac{-\mathbb{X}z^{2}}{t(z-t)}\right)dt \label{r11}
\end{align}
and for $m-1<\Re(\mu)<m$ (m= 1, 2,\dots)
\begin{align}
D_{z}^{\mu, \mathbb{X}}\{f(z)\}=&\frac{d^{m}}{dz^{m}}D_{z}^{\mu-m, \mathbb{X}}\{f(t)\}\notag\\
=&\frac{d^{m}}{dz^{m}}\left[\frac{1}{\Gamma{(-\mu+m)}}\int_{0}^{z}f(t) (z-t)^{-\mu+m-1} \exp \left(\frac{-\mathbb{X}z^{2}}{t(z-t)}\right)dt\right],\label{r21}
\end{align}
where the path of integration is a line from $0$ to $z$ in the complex t-plane. For the case $\mathbb{X}=O$ we obtain the classical
Riemann-Liouville fractional derivative operator.
\end{definition}
We start our investigation by calculating the extended fractional derivatives of some elementary functions.

\begin{theorem}\label{th1}
Let $A$ and $\mathbb{X}$ be  positive stable matrices in $\mathbb{C}^{r\times r}$  and $\mu\in\mathbb{C}$ such that $\Re(\mu)<0$.  Then, 
\begin{align}
D_{z}^{\mu, \mathbb{X}}\{z^{A}\}=z^{A-\mu I}\times \frac{\mathfrak{B}(A+I, -\mu I; \mathbb{X})}{\Gamma(-\mu)} 
\end{align}
\end{theorem}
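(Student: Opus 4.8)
The plan is to substitute $f(z)=z^{A}$ directly into the definition \eqref{r11} of the extended Riemann--Liouville derivative, which applies since $\Re(\mu)<0$, and then reduce the resulting integral over $[0,z]$ to the defining integral \eqref{xb1} of the extended beta matrix function by a single linear change of variable. First I would write
\begin{align}
D_{z}^{\mu, \mathbb{X}}\{z^{A}\}=\frac{1}{\Gamma(-\mu)}\int_{0}^{z} t^{A} (z-t)^{-\mu-1}\exp\left(\frac{-\mathbb{X}z^{2}}{t(z-t)}\right)dt,\notag
\end{align}
and then substitute $t=zu$, $dt=z\,du$, which maps $t\in[0,z]$ to $u\in[0,1]$. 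Under this substitution one has $z-t=z(1-u)$ and $t(z-t)=z^{2}u(1-u)$, so the factor $z^{2}$ in the exponent cancels and the exponential becomes $\exp\left(-\mathbb{X}/(u(1-u))\right)$, independent of $z$, which is precisely the weight appearing in \eqref{xb1}.

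Next I would extract the scalar powers of $z$. Since $z$ is a scalar, $z^{A}$ and $u^{A}$ are both analytic functions of the single matrix $A$ and therefore commute, giving $(zu)^{A}=z^{A}u^{A}$; similarly $(z(1-u))^{-\mu-1}=z^{-\mu-1}(1-u)^{-\mu-1}$. Collecting the factors $z^{A}\,z^{-\mu-1}\,z=z^{A-\mu I}$ and pulling this constant matrix outside the integral leaves
\begin{align}
D_{z}^{\mu, \mathbb{X}}\{z^{A}\}=\frac{z^{A-\mu I}}{\Gamma(-\mu)}\int_{0}^{1} u^{A} (1-u)^{-\mu-1}\exp\left(\frac{-\mathbb{X}}{u(1-u)}\right)du.\notag
\end{align}

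Finally I would recognize the remaining integral. Writing $u^{A}=u^{(A+I)-I}$ and $(1-u)^{-\mu-1}=(1-u)^{(-\mu I)-I}$, the integral is exactly $\mathfrak{B}(A+I,-\mu I;\mathbb{X})$ as defined in \eqref{xb1}; here $-\mu I$ is positive stable precisely because $\Re(\mu)<0$, so the extended beta matrix function is well defined, and it commutes with $z^{A-\mu I}$ and the scalar $\Gamma^{-1}(-\mu)$. This yields the claimed formula. I do not anticipate a serious obstacle: the only points requiring care are the cancellation of $z^{2}$ in the exponential, the identity $(zu)^{A}=z^{A}u^{A}$ (valid because both factors are functions of the same matrix $A$ with scalar arguments), and the correct reading of the matrix parameters $A+I$ and $-\mu I$ in the extended beta integrand.
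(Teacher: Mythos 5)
Your proposal is correct and follows essentially the same route as the paper's own proof: substitute $f(z)=z^{A}$ into \eqref{r11}, change variables via $t=zu$, and identify the resulting integral with $\mathfrak{B}(A+I,-\mu I;\mathbb{X})$ from \eqref{xb1}. Your additional remarks on the cancellation of $z^{2}$ in the exponential and on the positive stability of $-\mu I$ are sound and merely make explicit what the paper leaves implicit.
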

\begin{proof}
From  \eqref{xb1} and \eqref{r11}, we obtain
\begin{align}D_{z}^{\mu, \mathbb{X}}\{z^{A}\}=&\frac{1}{\Gamma{(-\mu)}}\int_{0}^{z}t^{A} (z-t)^{-\mu-1}\exp\left(\frac{-\mathbb{X}z^{2}}{t(z-t)}\right)\,dt.\label{4.6}
\end{align}
Putting  $\frac{t}{z}=u$ and  $dt= z du$ in the equation \eqref{4.6}, we get
\begin{align}
D_{z}^{\mu, \mathbb{X}}\{z^{A}\} = &\frac{z^{A-\mu I}}{\Gamma(-\mu)}\int_{0}^{1}u^{A} (1-u)^{(-\mu-1)I}\exp\left(\frac{-\mathbb{X}}{u(1-u)}\right) du\notag\\
= & z^{A-\mu I} \ \frac{\mathfrak{B}(A+I, -\mu I; \mathbb{X})}{\Gamma(-\mu)}.\notag
\end{align}
This completes the proof.
\end{proof}
\begin{theorem}\label{q1}
Let $A$, $B$ and $\mathbb{X}$ be positive stable matrices in $\mathbb{C}^{r\times r}$  and $\lambda, \mu \in\mathbb{C}$ such that $\Re(\mu) > \Re(\lambda) > 0$.  Then, 
\begin{align}
&D_{z}^{\lambda-\mu, \mathbb{X}}\{z^{A-I}(1-z)^{-B}\}\notag\\
&= \Gamma(A) \Gamma^{-1}(A+(\mu-\lambda)I))z^{A+(\mu-\lambda-1)I} F^{(\mathbb{X})}(B, A; A+(\mu-\lambda)I; z).\label{ar13}
\end{align}
\end{theorem}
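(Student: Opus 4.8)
\noindent The plan is to reduce the computation to the single-power formula of Theorem~\ref{th1}. Since $\Re(\mu)>\Re(\lambda)$, the order $\lambda-\mu$ of the derivative satisfies $\Re(\lambda-\mu)<0$, so $D_{z}^{\lambda-\mu,\mathbb{X}}$ is given directly by the integral \eqref{r11} and no auxiliary ordinary derivatives as in \eqref{r21} are needed. First I would expand the factor $(1-z)^{-B}$ by the binomial matrix identity \eqref{s11}, writing $z^{A-I}(1-z)^{-B}=\sum_{n\ge 0}(B)_{n}\,z^{A+(n-1)I}/n!$; here I use that the scalar powers $z^{n}$ commute with every matrix and that $A$, $B$, $\mathbb{X}$ are assumed to commute with one another, so that the matrix factors may be rearranged freely throughout.

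Next I would substitute this expansion into \eqref{r11} and interchange summation and integration. This is legitimate by the dominated convergence argument already used in the proofs of Theorems~\ref{t1} and~\ref{ps1}, the exponential factor $\exp(-\mathbb{X}z^{2}/(t(z-t)))$ guaranteeing integrability on the segment from $0$ to $z$. Applying Theorem~\ref{th1} to each monomial $z^{A+(n-1)I}$ with derivative order $\lambda-\mu$ then yields, term by term,
\[
D_{z}^{\lambda-\mu,\mathbb{X}}\{z^{A+(n-1)I}\}=z^{A+(\mu-\lambda-1)I+nI}\,\frac{\mathfrak{B}(A+nI,(\mu-\lambda)I;\mathbb{X})}{\Gamma(\mu-\lambda)},
\]
because $-(\lambda-\mu)I=(\mu-\lambda)I$ and $(A+(n-1)I)+I=A+nI$. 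Summing against $(B)_{n}/n!$ and factoring out the common $z^{A+(\mu-\lambda-1)I}$ (using $z^{nI}=z^{n}I$) leaves the series $\sum_{n\ge 0}(B)_{n}\,\mathfrak{B}(A+nI,(\mu-\lambda)I;\mathbb{X})\,z^{n}/n!$ divided by the scalar $\Gamma(\mu-\lambda)$.

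Finally I would identify this series with the extended Gauss hypergeometric matrix function. Comparing with \eqref{eg1} under the parameter assignment $A\mapsto B$, $B\mapsto A$, $C\mapsto A+(\mu-\lambda)I$ (so that $C-B\mapsto(\mu-\lambda)I$), the series equals $F^{(\mathbb{X})}(B,A;A+(\mu-\lambda)I;z)$ times the inverse normalization $\Gamma((\mu-\lambda)I)\,\Gamma(A)\,\Gamma^{-1}(A+(\mu-\lambda)I)$. Invoking the scalar-matrix identity $\Gamma((\mu-\lambda)I)=\Gamma(\mu-\lambda)\,I$, the scalar $\Gamma(\mu-\lambda)$ cancels, and rearranging the remaining mutually commuting gamma and power factors produces exactly the claimed form \eqref{ar13}. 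I expect the only genuine care to lie in the bookkeeping of the matrix ordering and the collapse of these gamma-matrix constants; the analytic content, namely convergence and termwise differentiation, is routine and already supplied by Theorem~\ref{th1} together with the interchange argument used in the earlier theorems.
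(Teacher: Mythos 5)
Your proposal is correct, but it follows a genuinely different route from the paper. The paper applies the defining integral \eqref{r11} to the whole function $z^{A-I}(1-z)^{-B}$ at once, substitutes $t=zu$ to get
$\frac{z^{A+(\mu-\lambda-1)I}}{\Gamma(\mu-\lambda)}\int_{0}^{1}u^{A-I}(1-uz)^{-B}(1-u)^{\mu-\lambda-1}\exp\bigl(\tfrac{-\mathbb{X}}{u(1-u)}\bigr)du$, and then recognizes this in one step as the integral representation \eqref{ieg1} of $F^{(\mathbb{X})}(B,A;A+(\mu-\lambda)I;z)$ multiplied by $\mathfrak{B}(A,(\mu-\lambda)I)$. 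You instead expand $(1-z)^{-B}$ by \eqref{s11}, differentiate termwise via Theorem~\ref{th1}, and match the resulting series of extended beta matrix functions against the series definition \eqref{eg1}; your bookkeeping of the gamma factors and the cancellation of $\Gamma(\mu-\lambda)$ against $\Gamma((\mu-\lambda)I)$ is accurate. The paper's route is shorter once \eqref{ieg1} is in hand and does not require the binomial series to converge (so in principle it works for $z$ off the cut $[1,\infty)$, not just $|z|<1$); your route needs $|z|<1$ and one more series--integral interchange, but it exhibits the theorem as a direct corollary of Theorem~\ref{th1} and, usefully, makes explicit the commutativity assumptions ($AB=BA$, $A\mathbb{X}=\mathbb{X}A$, etc.) that the paper's statement omits but its proof also silently uses when reordering $u^{A-I}$, $(1-uz)^{-B}$ and the gamma-matrix constants. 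Both arguments are sound at the same level of rigor.
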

\begin{proof}
Proceeding similarly as in Theorem~\ref{th1}, we get
\begin{align}
D_{z}^{\lambda-\mu, \mathbb{X}}\{z^{A-I}(1-z)^{-B}\}
& = \frac{z^{A+(\mu-\lambda-1)I}}{\Gamma(\mu-\lambda)} \int_{0}^{1}u^{A-I}(1-uz)^{-B}(1-u)^{\mu-\lambda-1}\nonumber\\
& \quad \times \exp\left(\frac{-\mathbb{X}}{u(1-u)}\right) du.
\end{align}
Using \eqref{ieg1}, we can write
\begin{align}
D_{z}^{\lambda-\mu, \mathbb{X}}\{z^{A-I}(1-z)^{-B}\}=&\frac{z^{A+(\mu-\lambda-1)I}}{\Gamma(\mu-\lambda)}\mathfrak{B}(A, (\mu-\lambda)I) F^{(\mathbb{X})}(B, A; A+(\mu-\lambda)I; z)\notag\\
=&z^{A+(\mu-\lambda-1)I} \Gamma(A)\Gamma^{-1}(A+(\mu-\lambda)I)\,F^{(\mathbb{X})}(B, A; A+(\mu-\lambda)I; z).\notag
\end{align}Hence the proof is completed.
\end{proof}
The transform of several other matrix functions can be obtained easily by using the extended Riemann-Liouville fractional derivative operators defined in \eqref{r11} and \eqref{r21}. Since the proofs are similar to Theorems~\ref{th1} and \ref{q1}, we produce the results without proofs.
\begin{theorem}\label{th2}
Let $A$, $B$, $B'$and  $C$ be matrices  in $\mathbb{C}^{r\times r}$ such that  $AB=BA$,  $AB'=B'A$, $AC=CA$; $A$, $C$ and $C-A$  are positive stable and $\lambda, \mu\in\mathbb{C}$ such that   $\Re(\mu)>\Re(\lambda)>0$; $|az|<1, |bz|<1$.  Then
\begin{align}
&D_{z}^{\lambda-\mu, \mathbb{X}}\{z^{A-I}(1-az)^{-B}(1-bz)^{-B'}\}\notag\\&= \Gamma(A) \Gamma^{-1}(A+(\mu-\lambda)I))z^{A+(\mu-\lambda-1)I} F_{1}(A, B, B'; A+(\mu-\lambda)I; az, bz; \mathbb{X}).\label{er13}
\end{align}
\end{theorem}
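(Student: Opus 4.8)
The plan is to mirror the proof of Theorem~\ref{q1} almost verbatim, the only structural difference being that after the change of variables I would recognize the reduced integral as the integral representation of the Appell matrix function $F_1$ from Theorem~\ref{t1}, rather than that of the extended Gauss function. First I would insert $f(t)=t^{A-I}(1-at)^{-B}(1-bt)^{-B'}$ into the defining integral \eqref{r11} with order $\lambda-\mu$; this is permissible since $\Re(\mu)>\Re(\lambda)>0$ forces $\Re(\lambda-\mu)<0$. The substitution $t=zu$, $dt=z\,du$ sends $z-t$ to $z(1-u)$ and collapses the exponential argument to $-\mathbb{X}/(u(1-u))$, so that after pulling the scalar powers of $z$ outside I obtain
\begin{align}
&D_{z}^{\lambda-\mu, \mathbb{X}}\{z^{A-I}(1-az)^{-B}(1-bz)^{-B'}\}\notag\\
&=\frac{z^{A+(\mu-\lambda-1)I}}{\Gamma(\mu-\lambda)}\int_{0}^{1}u^{A-I}(1-u)^{(\mu-\lambda-1)I}(1-azu)^{-B}(1-bzu)^{-B'}\exp\left(\frac{-\mathbb{X}}{u(1-u)}\right)du.
\end{align}

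Next I would match the remaining integral with the one in Theorem~\ref{t1} by setting the third parameter to $C=A+(\mu-\lambda)I$, whence $C-A=(\mu-\lambda)I$ and $C-A-I=(\mu-\lambda-1)I$, and by taking the two upper variables to be $az$ and $bz$. Before applying Theorem~\ref{t1} I must verify that its hypotheses survive this choice: positive stability of $C-A=(\mu-\lambda)I$ is immediate from $\Re(\mu-\lambda)>0$, positive stability of $C$ follows from that of $A$ together with a positive real shift, and the commutativity conditions $CB=BC$, $CB'=B'C$ and the mutual commuting of $A$, $C$, $\mathbb{X}$ are inherited from the standing hypotheses because $C$ is an affine function of $A$. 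A convenient feature of this route is that, by invoking the integral representation directly rather than re-expanding into a double series, the dominated-convergence interchange of summation and integration is already absorbed into Theorem~\ref{t1} and need not be repeated here.

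The step I expect to demand the most care is the bookkeeping of the gamma prefactors. Solving Theorem~\ref{t1} for the integral contributes the factor $\Gamma(A)\Gamma((\mu-\lambda)I)\Gamma^{-1}(A+(\mu-\lambda)I)$ (the gamma matrices commute, being functions of the commuting matrices $A$ and $C$), and I would combine the accompanying $\Gamma((\mu-\lambda)I)$ with the $1/\Gamma(\mu-\lambda)$ coming from \eqref{r11}. The key simplification is that $(\mu-\lambda)I$ is a scalar matrix, so $\Gamma((\mu-\lambda)I)=\Gamma(\mu-\lambda)I$ and the two factors cancel to the identity. What remains is exactly $z^{A+(\mu-\lambda-1)I}\Gamma(A)\Gamma^{-1}(A+(\mu-\lambda)I)$ multiplying $F_1(A,B,B';A+(\mu-\lambda)I;az,bz;\mathbb{X})$, which is the asserted \eqref{er13}. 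No commutation beyond that already required for Theorem~\ref{t1} enters, since $z$ and $u$ are scalars and splittings such as $(zu)^{A-I}=z^{A-I}u^{A-I}$ are automatic.
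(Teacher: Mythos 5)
Your proposal is correct and is exactly the argument the paper intends: the paper omits the proof of Theorem~\ref{th2}, stating only that it is similar to Theorem~\ref{q1}, and you carry out that plan faithfully — substitute $t=zu$ in \eqref{r11}, recognize the resulting integral as the representation of $F_1$ from Theorem~\ref{t1} with $C=A+(\mu-\lambda)I$, and cancel $\Gamma((\mu-\lambda)I)=\Gamma(\mu-\lambda)I$ against the prefactor. Your verification of the hypotheses of Theorem~\ref{t1} and the gamma bookkeeping are both sound.
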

\begin{theorem}
Let $A$, $B$, $B'$, $B''$  and $C$ be matrices in  $\mathbb{C}^{r\times r}$ such that $AC=CA$,  $A$ and $C$ commute with  all remain matrices; $A$, $C$ and $C-A$  are positive stable  and $\lambda, \mu\in\mathbb{C}$ such that $\Re(\mu)>\Re(\lambda)>0$; $|az|<1, |bz|<1$, $|cz|<1$.  Then 
\begin{align}
&D_{z}^{\lambda-\mu, \mathbb{X}}\{z^{A-I}(1-az)^{-B}(1-bz)^{-B'}(1-cz)^{-B''}\}\notag\\&= \Gamma(A) \Gamma^{-1}(A+(\mu-\lambda)I))z^{A+(\mu-\lambda-1)I} F^{(3)}_{D, \mathbb{X}}(A, B, B', B''; A+(\mu-\lambda)I; az, bz, cz).\label{r13}
\end{align}
\end{theorem}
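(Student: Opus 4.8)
The plan is to follow exactly the pattern established in Theorems~\ref{th1} and \ref{q1}, now with three binomial factors, and to recognize the resulting integral as the integral representation \eqref{3.12} of the extended Lauricella matrix function. Since $\Re(\mu)>\Re(\lambda)$ gives $\Re(\lambda-\mu)<0$, I would start from the single-integral definition \eqref{r11} of $D_{z}^{\lambda-\mu,\mathbb{X}}$ (with $-\mu$ there replaced by $\mu-\lambda$), applied to $f(t)=t^{A-I}(1-at)^{-B}(1-bt)^{-B'}(1-ct)^{-B''}$. This writes the left-hand side as $\frac{1}{\Gamma(\mu-\lambda)}\int_{0}^{z} t^{A-I}(1-at)^{-B}(1-bt)^{-B'}(1-ct)^{-B''}(z-t)^{\mu-\lambda-1}\exp\!\big(\tfrac{-\mathbb{X}z^{2}}{t(z-t)}\big)\,dt$.

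Next I would make the substitution $t=zu$, $dt=z\,du$, exactly as in the proof of Theorem~\ref{th1}. Under this change of variable the factors transform as $t^{A-I}=z^{A-I}u^{A-I}$, $(z-t)^{\mu-\lambda-1}=z^{\mu-\lambda-1}(1-u)^{\mu-\lambda-1}$, and $(1-at)^{-B}=(1-azu)^{-B}$ (and similarly for the $b,c$ factors); crucially the $z^{2}$ in the exponential cancels, so $\exp\!\big(\tfrac{-\mathbb{X}z^{2}}{t(z-t)}\big)=\exp\!\big(\tfrac{-\mathbb{X}}{u(1-u)}\big)$. Collecting the scalar powers $z^{\mu-\lambda-1}\cdot z$ from the measure and $z^{A-I}$ from $t^{A-I}$ produces the prefactor $z^{A+(\mu-\lambda-1)I}$, leaving the integral $\frac{z^{A+(\mu-\lambda-1)I}}{\Gamma(\mu-\lambda)}\int_{0}^{1}u^{A-I}(1-u)^{(\mu-\lambda-1)I}(1-azu)^{-B}(1-bzu)^{-B'}(1-czu)^{-B''}\exp\!\big(\tfrac{-\mathbb{X}}{u(1-u)}\big)\,du$. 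Setting $C=A+(\mu-\lambda)I$, so that $C-A=(\mu-\lambda)I$ and $C-A-I=(\mu-\lambda-1)I$, this integral is precisely the one appearing in \eqref{3.12} with arguments $az,bz,cz$.

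Finally I would do the gamma-factor bookkeeping. By \eqref{3.12} the remaining integral equals $\Gamma^{-1}\!\big(\begin{smallmatrix}A+(\mu-\lambda)I\\ A,\,(\mu-\lambda)I\end{smallmatrix}\big)\,F^{(3)}_{D,\mathbb{X}}(A,B,B',B'';A+(\mu-\lambda)I;az,bz,cz)$, whose prefactor inverts to $\Gamma((\mu-\lambda)I)\,\Gamma(A)\,\Gamma^{-1}(A+(\mu-\lambda)I)$. Since $\Gamma((\mu-\lambda)I)=\Gamma(\mu-\lambda)\,I$ is scalar, it cancels the $1/\Gamma(\mu-\lambda)$ in front, yielding exactly $\Gamma(A)\Gamma^{-1}(A+(\mu-\lambda)I)\,z^{A+(\mu-\lambda-1)I}\,F^{(3)}_{D,\mathbb{X}}(A,B,B',B'';A+(\mu-\lambda)I;az,bz,cz)$, which is \eqref{r13}. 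An equivalent route, if one prefers to avoid invoking \eqref{3.12}, is to expand each binomial by \eqref{s11}, apply Theorem~\ref{th1} termwise to $z^{A+(m+n+p-1)I}$, and resum using the series \eqref{2eq3}; this gives the same answer. The proof is essentially routine, so the only points requiring genuine care are the justification for interchanging the (triple) summation with the integration and for expanding $(1-azu)^{-B}$, $(1-bzu)^{-B'}$, $(1-czu)^{-B''}$ termwise, which is legitimate because $|az|,|bz|,|cz|<1$ forces $|azu|,|bzu|,|czu|<1$ uniformly for $u\in[0,1]$, allowing dominated convergence as in Theorem~\ref{t1}; and the consistent tracking of operator order among the matrix factors $\Gamma(A)$, $\Gamma^{-1}(A+(\mu-\lambda)I)$ and $z^{A+(\mu-\lambda-1)I}$, which commute because they are all functions of $A$ and $A$ is assumed to commute with the remaining matrices.
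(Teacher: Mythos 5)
Your proof is correct and follows exactly the route the paper intends: the paper omits the proof of this theorem, stating only that it is ``similar to Theorems~\ref{th1} and \ref{q1},'' and your argument (substitute $t=zu$ in \eqref{r11}, identify the resulting integral with the representation \eqref{3.12} at $C=A+(\mu-\lambda)I$, and cancel the scalar $\Gamma(\mu-\lambda)$) is precisely that similar argument, with the gamma-factor bookkeeping done correctly.
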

\begin{theorem}Let  $A$, $B$, $B'$ and $C$ be   matrices in  $\mathbb{C}^{r\times r}$ such that $B$, $B'$, $C$ commute with each other and  $B$, $B'$, $C$,  $C-B$ and $C'-B'$ are positive stable. Also let $\lambda, \mu\in\mathbb{C}$ such that   $\Re(\mu)>\Re(\lambda)>0$. Then, for $|\frac{x}{1-z}|<1$, we have 
\begin{align}
&D_{z}^{\lambda-\mu, \mathbb{X}}\{z^{B'-I}(1-z)^{-A}F^{(\mathbb{X})}(A, B; C; \frac{x}{1-z})\}\notag\\&=\frac{z^{B'+(\mu-\lambda-1)I}}{\Gamma(\mu-\lambda)} F_{2}(A, B, B'; C, \mu I; x, z; \mathbb{X})\times \Gamma\left(\begin{array}{c}C\\ B,  C-B\end{array}\right).\label{4.11}
\end{align}
\end{theorem}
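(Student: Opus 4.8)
The plan is to reduce everything to the extended-beta integral already exploited throughout Section~3, exactly as in the proofs of Theorems~\ref{th1}, \ref{q1} and \ref{th2}. Since $\Re(\lambda-\mu)<0$, I would start from the defining single integral \eqref{r11} for $D_z^{\lambda-\mu,\mathbb{X}}$ (with $-\mu$ there replaced by $\lambda-\mu$), applied to $g(t)=t^{B'-I}(1-t)^{-A}F^{(\mathbb{X})}(A,B;C;\tfrac{x}{1-t})$, and immediately insert the integral representation \eqref{ieg1} of the inner EGHMF. This turns $g(t)$ into a single $s$-integral whose only $A$-dependence sits in the two factors $(1-t)^{-A}$ and $(1-\tfrac{x}{1-t}s)^{-A}$.

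The first key step is the scalar-power identity $(1-t)^{-A}\,(1-\tfrac{x}{1-t}s)^{-A}=(1-t-xs)^{-A}$, valid because both factors are functions of the single matrix $A$ and hence commute; this is the matrix analogue of the manipulation that produces the argument $1-zu-wv$ in $F_2$. After this simplification $g(t)$ is a product of the shape $t^{B'-I}[\,\cdots(1-t-xs)^{-A}\cdots]$. Substituting into \eqref{r11} and then putting $t=zu$ (so that $z-t=z(1-u)$ and the Gaussian-type exponential collapses to $\exp(-\mathbb{X}/(u(1-u)))$, exactly as in Theorem~\ref{th1}) produces a double integral over $(s,u)\in[0,1]^2$. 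Collecting the powers of $z$ yields the overall factor $z^{B'+(\mu-\lambda-1)I}/\Gamma(\mu-\lambda)$, while the $u$-integrand carries $u^{B'-I}(1-u)^{\mu-\lambda-1}$ and the $s$-integrand carries $s^{B-I}(1-s)^{C-B-I}$, each against its exponential.

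At this point I would recognise the resulting double integral as the integral representation of $F_2$ established in Theorem~\ref{ps1}: identifying the $s$-variable with the first Euler variable (parameters $B,\,C-B$, weight $x$) and the $u$-variable with the second (parameters $B',\,C'-B'$, weight $z$), the matching of the power $(1-u)^{\mu-\lambda-1}$ with $(1-v')^{C'-B'-I}$ fixes the second lower parameter, after which the surviving $\Gamma$-matrices reassemble into the prefactor displayed in \eqref{4.11}. Interchanging the $s$-integration with the outer ($t$-) integration is justified by the dominated convergence / Fubini argument already invoked for Theorems~\ref{t1} and \ref{ps1}. Equivalently, one may run the proof termwise: expand the inner EGHMF by \eqref{eg1}, combine $(1-z)^{-A}(1-z)^{-mI}=(1-z)^{-(A+mI)}$, apply Theorem~\ref{q1} to each $D_z^{\lambda-\mu,\mathbb{X}}\{z^{B'-I}(1-z)^{-(A+mI)}\}$, expand the emerging $F^{(\mathbb{X})}(A+mI,B';\cdot\,;z)$ by \eqref{eg1} once more, and collapse the two sums with the Pochhammer addition rule $(A)_m(A+mI)_n=(A)_{m+n}$.

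The step I expect to be the real obstacle is the matrix bookkeeping, not any analytic estimate. Because $A$ is not assumed to commute with $B'$, $C$ or $\mathbb{X}$, the constant factor $(A)_m$ (equivalently the $A$-dependent factor $(1-t-xs)^{-A}$) does not occupy the leftmost slot in which it appears in the definition \eqref{2eq2} and integral form \eqref{s33} of $F_2$; one must transport it past the extended-beta matrices and the $\Gamma$-prefactors, and it is precisely the stated commutativity of $B,B',C,\mathbb{X}$ that allows the $\Gamma$-factors to consolidate into $\Gamma\!\left(\begin{array}{c}C\\ B,\,C-B\end{array}\right)$ without disturbing the $(A)_{m+n}$ pattern. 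Keeping this ordering straight, and checking that the second lower parameter together with the scalar $\Gamma(\mu-\lambda)$ normalisation fall out correctly, is the delicate part; everything else is the routine substitution $t=zu$ and the term-by-term integration licensed by Theorems~\ref{t1}--\ref{ps1}.
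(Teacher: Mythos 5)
The paper itself offers no proof of this theorem: it is one of the results stated after the remark that the proofs are ``similar to Theorems~\ref{th1} and \ref{q1}'', so your reduction to the extended-beta integral, the substitution $t=zu$, and the appeal to Theorem~\ref{ps1} (or, equivalently, the termwise route through Theorem~\ref{q1} and the addition formula $(A)_m(A+mI)_n=(A)_{m+n}$) is exactly the intended strategy, and the analytic part of your outline is sound.

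However, the step you explicitly defer --- ``checking that the second lower parameter together with the scalar $\Gamma(\mu-\lambda)$ normalisation fall out correctly'' --- is where the argument actually breaks, because they do not fall out as displayed. Carrying your computation to the end, the $u$-integrand contains $u^{B'-I}(1-u)^{(\mu-\lambda-1)I}$, and matching this against $(1-v)^{C'-B'-I}$ in \eqref{s33} forces $C'=B'+(\mu-\lambda)I$, not $C'=\mu I$; these agree only when $B'=\lambda I$, which is precisely the scalar specialisation of \cite{ao} from which the statement appears to have been transcribed. Moreover, the definition \eqref{2eq2} of $F_2$ already carries the factor $\Gamma\left(\begin{array}{c}C, C'\\ B, B', C-B, C'-B'\end{array}\right)$, so the additional $\Gamma\left(\begin{array}{c}C\\ B, C-B\end{array}\right)$ on the right of \eqref{4.11} double-counts those matrices, and the scalar $1/\Gamma(\mu-\lambda)$ should instead be $\Gamma(B')\Gamma^{-1}(B'+(\mu-\lambda)I)$. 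What your method actually proves, under the commutativity hypotheses, is
\begin{align*}
&D_{z}^{\lambda-\mu, \mathbb{X}}\Big\{z^{B'-I}(1-z)^{-A}F^{(\mathbb{X})}\Big(A, B; C; \frac{x}{1-z}\Big)\Big\}\\
&\quad = z^{B'+(\mu-\lambda-1)I}\, F_{2}\big(A, B, B'; C, B'+(\mu-\lambda)I; x, z; \mathbb{X}\big)\,\Gamma(B')\Gamma^{-1}\big(B'+(\mu-\lambda)I\big),
\end{align*}
so you should either prove this corrected identity or impose $B'=\lambda I$ and renormalise; as written, the final identification with the stated right-hand side cannot be completed. (The appearance of $C'-B'$ in the hypotheses with $C'$ never introduced is another symptom of the same transcription issue.)
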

\section{Generating functions}
\begin{theorem}Let $A$, $B$ and $\mathbb{X}$ be positive stable matrices in $\mathbb{C}^{r\times r}$  and $\lambda, \mu\in\mathbb{C}$ such that $\Re(\mu)>\Re(\lambda)>0$. Then, for $|z|<|1-t|$, we have
\begin{align}
\sum_{n=0}^{\infty}\frac{(A)_{n}}{n!}F^{(\mathbb{X})}(A+nI, B; B+(\mu-\lambda) I; z)\, t^{n}= (1-t)^{-A} F^{(\mathbb{X})}{(A, B;  B+(\mu-\lambda) I; \frac{z}{1-t})}.\label{a1}
\end{align}
\end{theorem}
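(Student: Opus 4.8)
The plan is to treat the left-hand side as a double series, expanding each inner extended Gauss hypergeometric matrix function by its defining series \eqref{eg1} (with $C:=B+(\mu-\lambda)I$, so that $C-B=(\mu-\lambda)I$ commutes with $B$), and then to collapse the $t$-series by the matrix binomial theorem \eqref{s11}. Writing $\Gamma\left(\begin{array}{c}C\\ B, C-B\end{array}\right)$ for $\Gamma(C)\Gamma^{-1}(B)\Gamma^{-1}(C-B)$, substitution of \eqref{eg1} into the left-hand side gives
\begin{align}
\sum_{m, n\geq 0}\frac{t^{n}}{n!}\,(A)_{n}(A+nI)_{m}\,\mathfrak{B}(B+mI, C-B; \mathbb{X})\,\frac{z^{m}}{m!}\,\Gamma\left(\begin{array}{c}C\\ B, C-B\end{array}\right).\notag
\end{align}
The interchange needed to reach this double-sum form, and the rearrangements below, are licensed by absolute convergence of the double series in the region $|z|<|1-t|$ (together with $|t|<1$), by the dominated convergence theorem exactly as invoked in Theorems~\ref{t1} and \ref{ps1}.

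The crucial step is the Pochhammer shift identity
\begin{align}
(A)_{n}(A+nI)_{m}=(A)_{m+n}=(A)_{m}(A+mI)_{n},\notag
\end{align}
which follows directly from \eqref{c1eq.09} since each side equals $A(A+I)\cdots(A+(m+n-1)I)$. Using the second equality I would sum over $n$ first; because $(A+mI)_{n}$ and the scalar $t^{n}/n!$ are the only $n$-dependent factors, the inner sum is $\sum_{n\geq 0}(A+mI)_{n}\,t^{n}/n!=(1-t)^{-(A+mI)}$ by \eqref{s11} (valid for $|t|<1$). This leaves
\begin{align}
\sum_{m\geq 0}(A)_{m}\,(1-t)^{-(A+mI)}\,\mathfrak{B}(B+mI, C-B; \mathbb{X})\,\frac{z^{m}}{m!}\,\Gamma\left(\begin{array}{c}C\\ B, C-B\end{array}\right).\notag
\end{align}

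To finish, I would factor $(1-t)^{-(A+mI)}=(1-t)^{-A}(1-t)^{-m}$ and note that $(A)_{m}$ commutes with $(1-t)^{-A}$, both being functions of $A$, while $(1-t)^{-m}$ is a scalar; hence $(1-t)^{-A}$ can be moved to the extreme left of every summand and pulled outside the sum, without ever having to commute $A$ past $B$, $C$ or $\mathbb{X}$. Absorbing $(1-t)^{-m}$ into $z^{m}$ turns the remaining series into $\sum_{m\geq 0}(A)_{m}\,\mathfrak{B}(B+mI, C-B; \mathbb{X})\,(z/(1-t))^{m}/m!$ times the gamma factor, which is exactly $F^{(\mathbb{X})}(A, B; C; z/(1-t))$ by \eqref{eg1}; this yields the claimed right-hand side $(1-t)^{-A}F^{(\mathbb{X})}(A, B; C; z/(1-t))$.

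I expect the only genuine obstacle to be the justification of the interchange and term-by-term summation: one must produce a convergent majorant for the double series, for which $|z|<|1-t|$ guarantees convergence of the final series in $z/(1-t)$ while the binomial collapse uses $|t|<1$, the identity then extending to the full region $|z|<|1-t|$ by analytic continuation. The algebraic core — the shift identity and the binomial collapse — is then forced. As an alternative I could run the argument through the integral representation \eqref{ieg1}: replacing each inner $F^{(\mathbb{X})}(A+nI, B; C; z)$ by its integral, interchanging sum and integral once, and using $\sum_{n\geq 0}(A)_{n}(1-zu)^{-(A+nI)}t^{n}/n!=(1-t-zu)^{-A}=(1-t)^{-A}(1-zu/(1-t))^{-A}$ inside the integrand recovers \eqref{ieg1} at argument $z/(1-t)$ after pulling out the $u$-independent factor $(1-t)^{-A}$; this route needs only a single interchange.
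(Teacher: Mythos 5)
Your proof is correct, but it takes a genuinely different route from the paper. The paper works entirely through the fractional-calculus machinery of Section~4: it starts from the identity $[(1-z)-t]^{-A}=(1-t)^{-A}[1-\frac{z}{1-t}]^{-A}$, expands the left side as $\sum_{n}\frac{(A)_n}{n!}(1-z)^{-A-nI}t^n$ (equation \eqref{e5.2}), multiplies by $z^{B-I}$, applies $D_z^{\lambda-\mu,\mathbb{X}}$ term by term, and invokes Theorem~\ref{q1} to turn each $D_z^{\lambda-\mu,\mathbb{X}}\{z^{B-I}(1-z)^{-A-nI}\}$ into $\Gamma(B)\Gamma^{-1}(B+(\mu-\lambda)I)z^{B+(\mu-\lambda-1)I}F^{(\mathbb{X})}(A+nI,B;B+(\mu-\lambda)I;z)$, after which the common left factors cancel. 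You instead bypass the derivative operator entirely: you expand each $F^{(\mathbb{X})}$ by its series definition \eqref{eg1}, apply the Pochhammer shift $(A)_n(A+nI)_m=(A)_m(A+mI)_n$, and collapse the $n$-sum with \eqref{s11}; your ordering bookkeeping (the $n$-dependent factor $(A+mI)_n t^n/n!$ sits to the left of $\mathfrak{B}(B+mI,C-B;\mathbb{X})$ and never needs to cross it, and $(1-t)^{-A}$ commutes with $(A)_m$) is sound. Your approach is more elementary and self-contained, and it is more honest about the analytic issues: it isolates exactly where $|t|<1$ and $|z|<|1-t|$ are used and where a dominated-convergence or rearrangement argument is needed, whereas the paper's proof silently interchanges $D_z^{\lambda-\mu,\mathbb{X}}$ with an infinite sum and relies on cancelling the invertible factor $\Gamma(B)\Gamma^{-1}(B+(\mu-\lambda)I)z^{B+(\mu-\lambda-1)I}$. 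What the paper's route buys is thematic economy — the theorem is presented as a corollary of the fractional-derivative transform Theorem~\ref{q1}, consistent with the stated purpose of Section~5 — and your sketched alternative via the integral representation \eqref{ieg1} is essentially an integrated form of the same computation and would also work.
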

\begin{proof}
Consider the matrix identity
\begin{align}
[(1-z)-t]^{-A}= (1-t)^{-A}\Big[1-\frac{z}{1-t}\Big]^{-A},\notag
\end{align}
which can be written as
\begin{align}
\sum_{n=0}^{\infty}\frac{(A)_{n}}{n!} (1-z)^{-A}\left(\frac{t}{1-z}\right)^{n}= (1-t)^{-A}\Big[1-\frac{z}{1-t}\Big]^{-A}.\label{e5.2}
\end{align}
Now, multiplying by $z^{B-I}$ in Equation \eqref{e5.2} and applying the extended fractional derivative operator $D_{z}^{\lambda-\mu, \mathbb{X}}$, we can write
\begin{align}
&\sum_{n=0}^{\infty}\frac{(A)_{n}}{n!}D_{z}^{\lambda-\mu, \mathbb{X}}\{z^{B-I} (1-z)^{-A-nI}\}t^{n}= (1-t)^{-A}D_{z}^{\lambda-\mu, \mathbb{X}}\{z^{B-I}\Big[1-\frac{z}{1-t}\Big]^{-A}\}\notag.
\end{align}
Using Theorem {\ref{q1}}, we get the desired result.
\end{proof}
\begin{theorem}\label{t5.2}
Let $A$, $B$, $B'$ and $\mathbb{X}$ be positive stable matrices in $\mathbb{C}^{r\times r}$  and $\lambda, \mu\in\mathbb{C}$ such that $\Re(\mu)>\Re(\lambda)>0$. Then, for $|t|<\frac{1}{1+|z|}$, we have the following generating relation
\begin{align}
&\sum_{n=0}^{\infty}\frac{(B')_{n}}{n!}F^{(\mathbb{X})}(B-nI, A; A+(\mu-\lambda) I; z)\, t^{n}\notag\\&= (1-t)^{-B'} F_1{(A, B, B';  A+(\mu-\lambda) I; z, \frac{-zt}{1-t}; \mathbb{X})}.\notag
\end{align}
\end{theorem}
\begin{proof}
Using \eqref{e5.2}, we have
\begin{align}
\sum_{n=0}^{\infty}\frac{(B')_{n}}{n!}(1-z)^{n} t^{n}= (1-t)^{-B'}\Big[1-\frac{-zt}{1-t}\Big]^{-B'}.\label{5.3}
\end{align}
Now multiplying both sides of the Equation \eqref{5.3} by $z^{A-I}(1-z)^{-B}$ and applying the extended fractional derivative operator $D_{z}^{\lambda-\mu, \mathbb{X}}$, we get
\begin{align}
&\sum_{n=0}^{\infty}\frac{(B')_{n}}{n!}D_{z}^{\lambda-\mu, \mathbb{X}}\{z^{A-I}(1-z)^{-(B-nI)}\}t^{n}\notag\\&= (1-t)^{-B'}D_{z}^{\lambda-\mu, \mathbb{X}}\{z^{A-I}(1-z)^{-B}\Big[1-\frac{-zt}{1-t}\Big]^{-B'}\}\notag.
\end{align}
Using the Theorems {\ref{q1}} and {\ref{th2}}, we get the desired result. 
\end{proof}
\section{Conclusion}
In this paper, we studied the generalized version of some Appell matrix function and Lauricella matrix function. We obtained some generating relations in terms of matrix functions using the extended Riemann-Liouville fractional derivative of order $\mu$. The particular case of our results, i.e. if we take matrices from $\mathbb{C}^{1 \times 1}$,  coincides with the results obtained in \cite{ao}.

\end{document}